\newtheorem{theorem}{Theorem}[section]
\newtheorem{lemma}[theorem]{Lemma}
\newtheorem{cor}[theorem]{Corollary}
\newtheorem{prop}[theorem]{Proposition}
\newtheorem{rem}[theorem]{Remark}
\theoremstyle{definition}
\theoremstyle{remark}
\numberwithin{equation}{section}
\newcommand{\FF}{{\mathbb{F}}}
\newcommand{\bC}{{\mathbf{C}}}
\newcommand{\bZ}{{\mathbf{Z}}}
\newcommand{\bN}{{\mathbf{N}}}
\newcommand{\Stab}{{\operatorname{Stab}}}
\newcommand{\Syl}{{\operatorname{Syl}}}
\newcommand{\Gal}{{\operatorname{Gal}}}
\newcommand{\Ker}{{\operatorname{Ker}}}
\newcommand{\n}{{\mbox{\rm I$\!$N}}}
\newcommand{\GF}{\mbox{GF}}
\newcommand{\X}{{\mbox{$\setminus$}\mbox{$\!\!\!/$}}}
\newcommand{\ug}{\ \raisebox{-.3em}{$\stackrel{\scriptstyle \leq}
{\scriptstyle \sim}$} \ }
\begin{document}

\title{regular and p-regular orbits of solvable linear groups}

\author{Thomas Michael Keller, YONG YANG}
\address{Department of Mathematics,
    Texas State University at San Marcos,
    San Marcos, TX 78666, USA.}
\makeatletter
\email{keller@txstate.edu, yang@txstate.edu}
\makeatother

\thanks{The research of the first author was partially supported by NSA standard grant MSPF08G-206}
\subjclass[2000]{20D10}
\date{}



\begin{abstract}
  Let $V$ be a faithful $G$-module for a finite group $G$ and let $p$ be a prime dividing $|G|$. An orbit $v^G$ for the action of $G$ on $V$ is $p$-regular if $|v^G|_p=|G:\bC_G(v)|_p=|G|_p$. Zhang asks the following question in ~\cite{Zhang}. Assume that a finite solvable group $G$ acts faithfully and irreducibly on a vector space $V$ over a finite field $\FF$. If $G$ has a $p$-regular orbit for every prime $p$ dividing $|G|$, is it true that $G$ will have a regular orbit on $V$? In ~\cite{LuCao}, L\"{u} and Cao construct an example showing that the answer to this question is no, however the example itself is not correct. In this paper, we study Zhang's question in detail. We construct examples showing that the answer to this question is no in general. We also prove the following result. Assume a finite solvable group $G$ of odd order acts faithfully and irreducibly on a vector space $V$ over a field of odd characteristic. If $G$ has a $p$-regular orbit for every prime $p$ dividing $|G|$, then $G$ will have a regular orbit on $V$.

\end{abstract}

\maketitle
\section{Introduction} \label{sec:introduction8}

Let $V$ be a faithful $G$-module for a finite group $G$ and let $p$ be a prime dividing $|G|$. An orbit $v^G$ for the action of $G$ on $V$ is $p$-regular if $|v^G|_p=|G:\bC_G(v)|_p=|G|_p$. Zhang asks the following question in ~\cite{Zhang}. Assume that a finite solvable group $G$ acts faithfully and irreducibly on a vector space $V$ over a finite field $\FF$. If $G$ has a $p$-regular orbit for every prime $p$ dividing $|G|$, is it true that $G$ will have a regular orbit on $V$? In ~\cite{LuCao}, L\"{u} and Cao construct an example showing that the answer to this question is no. However the example itself is not correct. We mention that Lewis is the first to observe that the example in ~\cite{LuCao} is wrong in his review ~\cite{Lewis}. In this paper, we study Zhang's question in detail.

First we construct examples showing that the answer to this question is no in general.
\begin{enumerate}
\item Example $1$: Let $H \cong Z_3$ acts faithfully and irreducibly on $V_1=\FF_2^2$ and $G \cong H \wr Z_5$ acts on $V=\FF_2^{10}$. Clearly $G$ acts faithfully and irreducibly on $V$. The group $G$ has a $3$-regular orbit and a $5$-regular orbit, but it has no regular orbit on $V$.
\item Example $2$: Let $H$ be a solvable group acts faithfully, irreducibly and primitively on $\FF_7^2$ and $H \cong Q_8 \rtimes S_3$, $\bZ(H) \cong Z_2$. We may view $H$ to be a matrix group and define $G$ to be the central product of $H$ with $H$, i.e. $G \cong H \Ydown H$. Thus $G$ acts faithfully, irreducibly and primitively on $V=\FF_7^4$. $|G|=1152=2^7 \cdot 3^2$. By direct calculation of GAP ~\cite{GAP}, the lengths of all the orbits of $G$ on $V$ are given in the following list. \[(1, 48, 48, 48, 144, 144, 144, 192, 192, 192, 288, 288, 288, 384)\] From the list we know that $G$ has a $2$-regular orbit and a $3$-regular orbit, but it has no regular orbit on $V$.
\end{enumerate}

In Example $1$, $G$ is a group of odd order induced from a group of odd order acting on a vector space over a field of characteristic $2$. In Example $2$, $G$ is a solvable group of even order acting on a vector space over a field of odd characteristic. Based on these examples, it is natural to ask the following question. Assume that a finite solvable group $G$ of odd order acts faithfully and irreducibly on a vector space $V$ over a field of odd characteristic. If $G$ has a $p$-regular orbit for every prime $p$ dividing $|G|$, is it true that $G$ will have a regular orbit on $V$? We answer this question affirmatively in the main theorem of this paper. We prove the following.
\begin{theorem}
Assume that a finite solvable group $G$ of odd order acts faithfully and irreducibly on a vector space $V$ over a field of odd characteristic. If $G$ has a $p$-regular orbit for every prime $p$ dividing $|G|$, then $G$ will have a regular orbit on $V$.
\end{theorem}

\section{Notation and Lemmas} \label{sec:Notation and Lemmas}
Before we prove the main result, we extract some important information from the work of A.Turull ~\cite[Section 1]{turull} in the following propositions. Note that Turull's results are stated for a prime $p$ but the same arguments will work for replacing $p$ with a prime power $q$. We include the proof here for completeness.

Notation: Let $q$ be a prime power and $n$ an integer.
\begin{enumerate}
\item $F(q^n)=\GF(q^n)^{\times}$ (the multiplicative group of the field of $q^n$ elements).
\item $\Gal(q^n)=\Gal(\GF(q^n):\GF(q))$.
\item $G(q^n) = \Gal (q^n) \ltimes F(q^n)$.
\item If $\sigma \in \Gal(q^n)$ and $y \in F(q^n)$ we denote $N_{\sigma}(y)=\prod_{\tau \in \langle \sigma \rangle}\tau(y)$.
\item Suppose that $s$ is a prime and divides $n=|\Gal(q^n)|$. Define $GN(q^n,s)=A \ltimes N \subseteq G(q^n)$ where $A$ is a subgroup of $\Gal(q^n)$ of order $s$ and $N=\{x \in F(q^n): \prod_{\sigma \in A} \sigma(x)=1\}$.
\end{enumerate}

\begin{prop}\label{prop1}

a) Let $\sigma \in \Gal (q^n)$ be of order $s$ and set
$N= \{ x \in F(q^n): N_{\sigma}(x) = 1\}$.
Then we have that $x \in N$ iff $x = \frac {\sigma(y)} y$  for some $y \in F(q^n)$. Furthermore we have
\[|N|=\frac {q^n-1}{q^{n/s}-1}\]

b) $N$ is a cyclic normal subgroup of $GN(q^n,s)$ of index $s$ and every element of $GN(q^n,s)$ not in $N$ has order $s$.

c) For any prime $r$ dividing $|N|$, we have $r \geq s$ and either $r=s$ or $r \equiv 1(s)$. If $r$ is a prime dividing $|N|$ with $r\not=s$, and $b\in N$ is of order $r$, then $\langle\sigma\rangle\langle b\rangle$ is a Frobenius group of order $sr$ with Frobenius kernel $\langle b\rangle$.

d) Let $A$ be a subgroup of $G(q^n)$. Set $B=A \cap F(q^n)$. There exists a conjugate $A_1$ in $G(q^n)$ of $A$ such that for any $x \in A \backslash B$ of prime order $q_x$, there is $\sigma \in A_1 \cap \Gal(q^n)$ with $\sigma \neq 1$ such that $\sigma^{q_x}=1$.

\end{prop}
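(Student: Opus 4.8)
The plan is to treat the four parts in turn, the first three directly and the last via a Sylow-plus-cohomology argument.

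For (a), identify $\langle\sigma\rangle$ with $\Gal(\GF(q^n)/\GF(q^{n/s}))$ — it has order $s$ and fixes the subfield $\GF(q^{n/s})$, which has the right index — so that $N_\sigma$ is the field norm, i.e.\ the power map $x\mapsto x^{M}$ with $M=(q^n-1)/(q^{n/s}-1)$. The characterization of $N$ is then Hilbert's Theorem~90 for this cyclic extension (the reverse implication being a one-line computation using that $\tau\mapsto\tau\sigma$ permutes $\langle\sigma\rangle$), and since $N=\Ker(x\mapsto x^{M})$ inside the cyclic group of order $q^n-1$ with $M\mid q^n-1$, we get $|N|=M$. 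For (b), $N$ is cyclic because $F(q^n)$ is, and $\sigma$-invariant for the same permutation reason, hence normal of index $s$ in $GN(q^n,s)$; and for a generator $g$ of $A$ (of order $s$, as $s$ is prime) and $x\in N$, a short computation in the semidirect product gives $(gx)^s=g^s\prod_{j=0}^{s-1}g^{-j}(x)=N_\sigma(x)=1$, so every element outside $N$ has order exactly $s$.

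For (c), put $Q=q^{n/s}$, so $|N|=(Q^s-1)/(Q-1)=1+Q+\cdots+Q^{s-1}$. A prime $r$ dividing $|N|$ does not divide $q$, so $Q$ is a unit mod $r$, and $r\mid Q^s-1$ forces the multiplicative order of $Q$ mod $r$ to divide the prime $s$: if it equals $s$ then $s\mid r-1$, and if it equals $1$ then $|N|\equiv s\pmod r$ so $r=s$; either way $r\ge s$. For the Frobenius statement, $\langle b\rangle$ is the unique subgroup of order $r$ in the cyclic group $N$, hence $\sigma$-invariant, so $\langle\sigma\rangle\langle b\rangle$ has order $sr$ with normal subgroup $\langle b\rangle$ and complement $\langle\sigma\rangle$; and $\sigma(b)=b$ would place $b$ in $\GF(q^n)^{\langle\sigma\rangle}=\GF(q^{n/s})$ and give $1=N_\sigma(b)=b^s$, impossible since $b$ has prime order $r\ne s$, so $\langle\sigma\rangle$ acts nontrivially on $\langle b\rangle$, hence (being of prime order with $\Aut\langle b\rangle$ cyclic) fixed-point-freely, and the group is Frobenius with kernel $\langle b\rangle$.

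For (d) — the real work — let $\pi\colon G(q^n)\to\Gal(q^n)$ be the quotient by $F(q^n)$, so $B=\Ker(\pi|_A)$ and $A/B\cong\pi(A)$ is cyclic. Let $\Pi$ be the set of primes that occur as orders of elements of $A\setminus B$; for $r\in\Pi$ let $S_r\le\Gal(q^n)$ be the unique subgroup of order $r$, and set $S=\prod_{r\in\Pi}S_r\le\pi(A)$. It suffices to produce $g\in F(q^n)$ with $S\le A^g$, since then for any $x\in A\setminus B$ of prime order $q_x\in\Pi$ a generator of $S_{q_x}\le A^g\cap\Gal(q^n)$ is the required $\sigma$. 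As $\pi^{-1}(S)=S\ltimes F(q^n)$ (viewing $S\le\Gal(q^n)\le G(q^n)$) is normal in $G(q^n)$, it is enough to conjugate $A^\ast:=\pi^{-1}(S)\cap A$ into a subgroup containing $S$; here $A^\ast\cap F(q^n)=B$, $A^\ast/B\cong S$, and every element of $A\setminus B$ of prime order lies in $A^\ast$. I would then argue in two steps. First, $A^\ast$ splits over $B$: $B$ is abelian and normal, so by Gaschütz's splitting theorem it is enough to split each Sylow subgroup over the corresponding Sylow subgroup of $B$, which is automatic for primes outside $\Pi$ and, for $p\in\Pi$, holds because an order-$p$ element of $A^\ast\setminus B$, conjugated into a Sylow $p$-subgroup of $A^\ast$, generates a complement there to the Sylow $p$-subgroup of $B$ (the quotient being $Z_p$). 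So $A^\ast=C\ltimes B$ with $C\cong S$. Second, $C$ and $S$ are both complements to $F(q^n)$ in $\pi^{-1}(S)=S\ltimes F(q^n)$, and since $S$ is cyclic, Hilbert's Theorem~90 gives $H^1(S,\GF(q^n)^\times)=0$, so all such complements are $F(q^n)$-conjugate; hence $C^g=S$ for a suitable $g\in F(q^n)$ and $S=C^g\le A^g$. The main obstacle, and the reason for this detour, is to make a single conjugating element serve all primes of $\Pi$ simultaneously — it is the vanishing of $H^1$ that glues the prime-by-prime information together.
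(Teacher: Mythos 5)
Your proofs of (a) and (b) follow the paper's route (the paper carries out the Hilbert~90 surjectivity by the same counting you implicitly invoke, and verifies the order statement via the same identity $(\sigma x)^{o(\sigma)}=N_\sigma(x)$), and your (c) replaces the paper's short argument via $Z(GN(q^n,s))$ with an equivalent computation of the multiplicative order of $Q=q^{n/s}$ modulo $r$; these are all correct and essentially interchangeable. Part (d) is where you genuinely diverge, and your argument is correct but structured quite differently. The paper orders the relevant primes $q_1<\dots<q_t$, assumes a conjugate $A^*$ already contains Galois elements of orders $q_1,\dots,q_{i-1}$ with $i$ maximal, writes an order-$q_i$ element as $\sigma_i\alpha$, and uses the coprime-action decomposition $\langle\alpha\rangle=\bC_{\langle\alpha\rangle}(\sigma_j)\times[\sigma_j,\langle\alpha\rangle]$ (the coprimality $q_j\nmid|\langle\alpha\rangle|$ being exactly where part (c) and the ordering of the primes are needed) to trade $\alpha$ for an element $y$ fixed by $\sigma_1,\dots,\sigma_{i-1}$, and finally applies (a) inside the subfield $F(q^{n/(q_1\cdots q_{i-1})})$ to conjugate $\sigma_iy$ to $\sigma_i$ without disturbing the earlier $\sigma_j$. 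You instead isolate $A^*=\pi^{-1}(S)\cap A$ with $A^*/B\cong S$ cyclic of squarefree order, split $A^*$ over $B$ by the Sylow-wise form of Gasch\"utz's theorem (each Sylow split being exhibited directly by an element of prime order in $A^*\setminus B$), and then use $H^1(S,F(q^n))=0$ (Hilbert~90) to conjugate the resulting complement onto $S$ by a single element of $F(q^n)$. Your route buys simultaneity for free — the vanishing of $H^1$ handles all primes at once, with no need for part (c), the ordering of the primes, or the greedy induction — at the cost of invoking Gasch\"utz and the classification of complements by $H^1$; the paper's route is more elementary and self-contained, using only coprime action and the explicit surjectivity established in (a), but must work prime by prime and lean on (c) to guarantee the coprimality that drives each step. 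Both are complete; I see no gap in yours.
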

\begin{proof}
a) Let $y \in F(q^n)$ and clearly $N_{\sigma}(\sigma(y)/y)=1$, so that $f(y)=\sigma(y)/y$ is a homomorphism of $F(q^n)$ into $N$.
If for some positive integer $r$ and $z \in N$ we have $z=\sigma^r(y)/y$, then we have
\[z=\frac {\sigma(\sigma^{r-1}(y)\sigma^{r-2}(y)...y)}{\sigma^{r-1}(y)\sigma^{r-2}(y)...y},\]

so that we may replace $\sigma$ by $\sigma^r$ provided that $(r,s)=1$. So we may suppose
\[\sigma(x)=x^{q^{(n/s)}}.\] Then

\[N_{\sigma}(x)=x \cdot x^{q^{(n/s)}}...x^{q^{(n/s)(s-1)}}=x^v \]

where $v=1 + q^{(n/s)}+...+q^{(n/s)(s-1)}=\frac {q^n-1} {q^{(n/s)}-1}$.

Thus $|N|=(\frac {q^n-1} {q^{(n/s)}-1}, q^n-1)=\frac {q^n-1} {q^{(n/s)}-1}$.

Furthermore $f(x)=x^{q^{(n/s)}-1}$, so that $|\Ker f|=(q^{(n/s)}-1, q^n-1)=q^{(n/s)}-1$.

Thus $f$ is surjective and this shows a).

b) Take $\sigma \in \Gal(q^n)$ and $x \in F(q^n)$, then $(\sigma x)^{o(\sigma)}=N_{\sigma}(x)$ and b) is clear.

c) We know that if $x\in Z(GN(q^n,s))$, then $x^s=1$, and so if $r$ is a prime dividing $|N|$ with $r\not=s$, and $b\in N$ is of order $r$, then $\langle\sigma\rangle\langle b\rangle$ is a Frobenius group of order $sr$ with Frobenius kernel $\langle b\rangle$ and in particular $r \equiv 1(s)$.

d) Write $|\Gal(q^n)|=q_1^{\epsilon_1}...q_t^{\epsilon_t}q_{t+1}^{\epsilon_{t+1}}...q_r^{\epsilon_r}$ a product of primes such that $A \backslash B$ has elements of order $q_i$ $i=1,...,t$, but not of order $q_{t+1},...,q_r$. Assume also that $q_1 < q_2<...<q_t$.
Take $A^*$ a conjugate of $A$ which contains elements $\sigma_1,...,\sigma_{i-1} \in \Gal(q^n)$ of order $q_1,...,q_{i-1}$ respectively, with $i$ as large as possible. Assume $i \leq t$.

Take $x \in A^* \backslash B$ of order $q_i$. We may write $x=\sigma_i \alpha$ with $\sigma_i \in \Gal(q^n)$ of order $q_i$ and $\alpha \in F(q^n)$. We have $1 =(\sigma_i \alpha)^{q_i}=N_{\sigma_i}(\alpha)$.

Since $\sigma_j \in A^*$ for $j=1,..., i-1$ we have $[\sigma_j, x] \in A^*$ and $[\sigma_j,x]=[\sigma_j,\sigma_i \alpha]=\sigma_j^{-1} \alpha^{-1} \sigma_i^{-1}\sigma_j\sigma_i \alpha= [\sigma_j, \alpha]$. On the other hand $\sigma_j$ normalizes $\langle \alpha \rangle$, and since $N_{\sigma_i}(\alpha)=1$, any prime dividing $|\langle \alpha \rangle|$ is at least as large as $q_i$ by c). Therefore $q_j \nmid |\langle \alpha \rangle|$ and \[\langle \alpha \rangle = \bC_{\langle \alpha \rangle} (\sigma_j) \times  [\sigma_j, \langle \alpha \rangle]. \]
Since $[\sigma_j,\langle \alpha \rangle]=\langle [\sigma_j, \alpha] \rangle \subseteq A^*$ and $\sigma_i \alpha \in A^*$, there is $y \in \bC_{\langle \alpha \rangle}(\sigma_j)$ such that $\sigma_i y \in A^*$.

This process actually gives $y \in \bC_{\langle \alpha \rangle}(\sigma_1,...,\sigma_{i-1})$ such that $\sigma_i y \in A^*$ and (of
course) $N_{\sigma_i}(y) = 1$.

Now $y \in F(q^{n/(q_1...q_{i-1})})$ and by a) there is $z \in F(q^{n/(q_1...q_{i-1})})$ such that $y= \sigma_i^{-1}(z)/z$.

Take ${A^*}^z$. Then, since $z \in F(q^{n/(q_1...q_{i-1})})$, $\sigma_j \in {A^*}^z$ for $j=1,...,i-1$.

$\sigma_i= \sigma_i \sigma_i^{-1} (z^{-1}) y z = z^{-1} \sigma_i y z \in {A^*}^z$, a contradiction. So we have d).
\end{proof}

\begin{prop}\label{prop2}
$G(q^n)$ acts on the left in a natural way on $F(q^n)$, with $F(q^n)$ acting by multiplication and $\Gal(q^n)$ in its natural way. Let $A \subseteq G(q^n)$ and $B = A \cap F(q^n)$. Then the following are equivalent:

A) $A$ has a regular orbit on $F(q^n)$.

B) For any prime $s$, $GN(q^n, s)$ is not conjugate in $G(q^n)$ to a subgroup of $A$.

C) For any prime $s$ such that $A \backslash B$ has an element of order $s$, if we take $\sigma_s \in \Gal(q^n)$ of order $s$ and $N_s= \{x \in F(q^n): N_{\sigma_s}(x)=1 \}$. Then $N_s \not \subseteq B$.

\end{prop}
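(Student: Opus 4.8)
The plan is to establish the cycle A) $\Rightarrow$ B) $\Rightarrow$ C) $\Rightarrow$ A). Three elementary facts are used throughout. Since $F(q^n)$ acts regularly on itself, the stabilizer of $1$ in $G(q^n)$ is $\Gal(q^n)$ and the stabilizer of an arbitrary $w\in F(q^n)$ is the cyclic group $w\,\Gal(q^n)\,w^{-1}$, whose unique subgroup of prime order $p$ (for $p\mid n$) is $w S_p w^{-1}$, where $S_p$ is the unique subgroup of order $p$ of $\Gal(q^n)$. Since $B$ acts by multiplication it is semiregular, so $\bC_B(w)=1$ for every $w$. And since $F(q^n)$ is cyclic, $N_s$ is its unique subgroup of the order computed in Proposition \ref{prop1}(a), hence characteristic in $F(q^n)$ and so invariant under conjugation in $G(q^n)$. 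From these facts one checks immediately that A), B), C) are each unchanged when $A$ is replaced by a $G(q^n)$-conjugate; this is what will let me normalize $A$ via Proposition \ref{prop1}(d) in the hard step.

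For A) $\Rightarrow$ B): the group $GN(q^n,s)$ has no regular orbit on $F(q^n)$ at all, because every $w$ is fixed by the element of $GN(q^n,s)$ with Galois part $\sigma_s$ and $F(q^n)$-part $w/\sigma_s(w)$ — this element is nonidentity, it fixes $w$, and it lies in $GN(q^n,s)$ precisely because $w/\sigma_s(w)\in N_s$ by Proposition \ref{prop1}(a). Hence if a conjugate of $GN(q^n,s)$ is contained in $A$, a regular $A$-orbit would restrict to a regular orbit of that conjugate, a contradiction. For B) $\Rightarrow$ C) I argue contrapositively: suppose a prime $s$ has an element $x\in A\setminus B$ of order $s$ with $N_s\subseteq B$. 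Writing $x$ with Galois part $\sigma$ and $F(q^n)$-part $\alpha$, the fact that $x$ has order $s$ forces $\sigma$ to have order $s$ and $\alpha\in N_s$ (since $x^{s}$ equals the norm $N_\sigma(\alpha)$). Then $x$ normalizes the characteristic subgroup $N_s$, which lies in $B\le A$, so $\langle x\rangle N_s$ is a subgroup of $A$ of order $s\,|N_s|$; being contained in $\langle\sigma\rangle\ltimes N_s=GN(q^n,s)$, which has the same order, it equals $GN(q^n,s)$. Thus $GN(q^n,s)\le A$ and B) fails.

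For C) $\Rightarrow$ A): using the conjugacy-invariance above and Proposition \ref{prop1}(d), I may assume that $A\cap\Gal(q^n)$ contains the subgroup of order $p$ for every prime $p$ for which $A\setminus B$ has an element of order $p$; let $\Pi$ be the (possibly empty) set of such primes, so $S_p\le A$ for all $p\in\Pi$. For $p\in\Pi$ let $\phi_p\colon F(q^n)\to N_p$ be the surjective homomorphism $w\mapsto\sigma_p(w)/w$ of Proposition \ref{prop1}(a), whose kernel is $\GF(q^{n/p})^{\times}$, and put $W_p=\phi_p^{-1}(B)$. The first claim is that $\bC_A(w)\ne1$ if and only if $w\in W_p$ for some $p\in\Pi$: any prime-order element of $\bC_A(w)$ cannot lie in $B$ (as $B$ is semiregular), so it lies in $A\setminus B$, has prime order $p\in\Pi$, and hence generates $wS_pw^{-1}$, the order-$p$ subgroup of $w\,\Gal(q^n)\,w^{-1}$; so $w\sigma_pw^{-1}\in A$, and since $\sigma_p\in A$ the set $\{x\in F(q^n):\sigma_p x\in A\}$ is exactly the coset $B$, which makes this membership say $w/\sigma_p(w)\in B$, i.e. $w\in W_p$; the converse is the same computation reversed. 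The second claim is that $W_p$ is a subgroup of $F(q^n)$ containing $B$ — it is the $\phi_p$-preimage of the subgroup $B$, and $B\subseteq W_p$ because $\phi_p$ carries the characteristic subgroup $B$ into itself. Therefore each $W_p/B$ is a subgroup of the cyclic group $F(q^n)/B$, and $W_p/B$ is the whole group if and only if $W_p=F(q^n)$, i.e. $\phi_p(F(q^n))=N_p\subseteq B$ — which C) forbids. A cyclic group is never the union of its proper subgroups, so $\bigcup_{p\in\Pi}(W_p/B)\ne F(q^n)/B$; as each $W_p$ is a union of $B$-cosets, this gives $\bigcup_{p\in\Pi}W_p\ne F(q^n)$, and any $w$ outside the union satisfies $\bC_A(w)=1$ and lies in a regular orbit.

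The only genuinely delicate implication is C) $\Rightarrow$ A), and within it the decisive move is the normalization from Proposition \ref{prop1}(d): it is exactly the arrangement $\sigma_p\in A$ that upgrades the ``bad locus'' $W_p$ from a coset of a subgroup (which on its own could perfectly well cover $F(q^n)$) into a genuine subgroup, after which the triviality that a cyclic group is not a union of its proper subgroups closes the argument. Everything else is routine manipulation inside the semidirect product $G(q^n)=\Gal(q^n)\ltimes F(q^n)$: that $\{x:\sigma_p x\in A\}$ is a coset of $B$ whenever nonempty, the image and kernel of $\phi_p$ (Proposition \ref{prop1}(a)), and the order count identifying $GN(q^n,s)$ as a subgroup of $A$ in the step B) $\Rightarrow$ C).
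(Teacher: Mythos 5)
Your proof is correct and rests on the same core argument as the paper's: after normalizing via Proposition \ref{prop1}(d) so that $\sigma_p\in A$, the set of vectors with nontrivial stabilizer is the union of the subgroups $W_p=\{x:\sigma_p(x)/x\in B\}$, each proper precisely when $N_p\not\subseteq B$, and a cyclic group is not a union of proper subgroups. Your organization into the cycle A)$\Rightarrow$B)$\Rightarrow$C)$\Rightarrow$A) and your explicit handling of the stabilizer $w\,\Gal(q^n)\,w^{-1}$ are only cosmetic differences (and if anything spell out details the paper leaves terse).
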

\begin{proof}
Set $F=F(q^n)$ and $\pi=\{s$ a prime: $A \backslash B$ has an element of order $s \}$. By Proposition ~\ref{prop1} d), we may assume that, for any $s \in \pi$, $\sigma_s \in \Gal(q^n) \cap A$ is an element of order $s$. Now B) and C) are equivalent.

Clearly we have $\bigcup_{a\in A \backslash \{ 1\}}\bC_F(a)=\bigcup_{a\in {A \backslash B}} \bC_F(a)$.

So
\[\bigcup_{a\in A \backslash \{ 1\}}\bC_F(a)=\bigcup_{s\in\pi}\bigcup_{b\in B}\bC_F(b\cdot\sigma_s)\]

But $\bigcup_{b\in B}\bC_F(b\cdot\sigma_s)=\{x \in F: \frac {\sigma_s(x)}{x} \in B\}$ is a subgroup of $F$ and is proper iff $N_s \not \subseteq B$,
by Proposition ~\ref{prop1} a). Since $F$ is cyclic, $F=\bigcup_{s\in\pi}\bigcup_{b\in B}\bC_F(b\cdot\sigma_s)$ iff $\bigcup_{b\in B}\bC_F(b\cdot\sigma_s)=F$ for some $s \in \pi$, or equivalently $N_s \subseteq B$ for some $s \in \pi$. This shows Proposition ~\ref{prop2}.
\end{proof}

As in \cite{manz/wolf}, if $V$ is a finite vector space of dimension $n$ over $\GF(q)$,
where $q$ is a prime power,
we denote by $\Gamma(q^n)=\Gamma(V)$ the semilinear group of $V$, i.e.,
\[\Gamma(V)=\{x\mapsto
ax^\sigma\ |\ x\in\GF(q^n),a\in\GF(q^n)^\times,\sigma\in\Gal(\GF(q^n):\GF(q))\},\]
and we write
\[\Gamma_0(V)=\{x\mapsto ax\ |\ x\in\GF(q^n),a\in\GF(q^n)^\times\}\]
for the subgroup of multiplications; note that this acts fixed point freely on
$\GF(q^n)$.\\

\begin{cor}\label{cor1}
Let $q$ be a prime power, $n\in\n$ and consider $V=\GF(q^n)$ as an $n$-dimensional vector space over $\GF(q)$, and let $A\leq\Gamma(V)$. Let $B=A\cap\Gamma_0(V)$, $\pi=\{r|r$ is a prime such that $A \backslash B$ has an element of order $r\}$. If $A$ does not have a regular orbit on $V$, then there exists a prime $s \in \pi$ such that for any $v \in V$ there is an element of order $s$ centralizing $v$.

\end{cor}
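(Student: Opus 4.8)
The plan is to obtain Corollary~\ref{cor1} directly from Proposition~\ref{prop2}. Under the standard identification $V=\GF(q^n)$ we have $\Gamma(V)=G(q^n)$ and $\Gamma_0(V)=F(q^n)$, and the action of $\Gamma(V)$ on $V$ restricts on $V\setminus\{0\}=F(q^n)$ to precisely the natural action of $G(q^n)$ on $F(q^n)$ considered in Proposition~\ref{prop2}, since $0$ is fixed by all of $\Gamma(V)$. Thus I first observe that the hypothesis forces $A\neq1$, so a regular orbit of $A$ on $V$ could not contain $0$, and therefore $A$ has no regular orbit on $F(q^n)$ either. Since $\Gamma_0(V)\nor\Gamma(V)$, replacing $A$ by a $\Gamma(V)$-conjugate changes neither $B=A\cap\Gamma_0(V)$, nor $\pi$, nor whether $A$ has a regular orbit on $V$, nor --- after translating the relevant vector by the conjugating element --- the conclusion to be proved; so by Proposition~\ref{prop1}~d) I may assume, exactly as at the start of the proof of Proposition~\ref{prop2}, that for each $s\in\pi$ there is $\sigma_s\in A\cap\Gal(q^n)$ of order $s$.

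Now Proposition~\ref{prop2} applies: statement A) fails for $A$, hence C) fails, so there is a prime $s\in\pi$ with $N_s\subseteq B$, where $N_s=\{x\in F(q^n):N_{\sigma_s}(x)=1\}$. I claim this $s$ works. If $v=0$ then $\sigma_s\in A$ has order $s$ and fixes $v$. If $v\in F(q^n)$, a reindexing of the norm product over $\langle\sigma_s\rangle$ shows $N_{\sigma_s}\bigl(\sigma_s(v)/v\bigr)=1$, so $\sigma_s(v)/v\in N_s\subseteq B$; as in the proof of Proposition~\ref{prop2} this means $v$ lies in $\bigcup_{b\in B}\bC_{F(q^n)}(b\cdot\sigma_s)=\{x:\sigma_s(x)/x\in B\}$, i.e.\ some $b\cdot\sigma_s$ with $b\in B$ fixes $v$. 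Such an element lies in $A$ (because $b\in B\leq A$ and $\sigma_s\in A$), it is nontrivial because its image in $\Gal(q^n)$ is $\sigma_s\neq1$, and its $s$-th power is an element of $\Gamma_0(V)$ fixing the nonzero vector $v$, hence is the identity since $\Gamma_0(V)$ acts fixed point freely on $V\setminus\{0\}$. Therefore $b\cdot\sigma_s$ has order exactly $s$ and centralizes $v$, which is the desired conclusion.

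Most of this is bookkeeping --- translating between the ``$G(q^n)$ on $F(q^n)$'' language of Section~\ref{sec:Notation and Lemmas} and the ``$\Gamma(V)$ on $V$'' language, and re-using the identity for $\bigcup_{b\in B}\bC_{F(q^n)}(b\cdot\sigma_s)$ from the proof of Proposition~\ref{prop2}. The one genuinely substantive point, and the step I expect to require the most care, is upgrading ``order dividing $s$'' to ``order exactly $s$'' for the centralizing element $b\cdot\sigma_s$; this is exactly where the fixed point free action of $\Gamma_0(V)$ on $V\setminus\{0\}$ is used (equivalently, where one verifies $b\in N_s$). A minor additional point is the conjugation reduction, which is legitimate only because the statement of the corollary is invariant under replacing $A$ by a $\Gamma(V)$-conjugate, conjugation permuting the vectors of $V$ and preserving element orders.
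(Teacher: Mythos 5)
Your proposal is correct and follows essentially the same route as the paper: reduce by conjugation via Proposition~\ref{prop1}~d) so that $\sigma_s\in A\cap\Gal(q^n)$ for each $s\in\pi$, then use the equivalence in Proposition~\ref{prop2} (equivalently, the covering $V=\bigcup_{b\in B}\bC_V(b\cdot\sigma_s)$ from its proof) to produce the prime $s$. The only cosmetic difference is that you pin down the order of the centralizing element $b\cdot\sigma_s$ as exactly $s$ via the fixed point free action of $\Gamma_0(V)$, where the paper notes its order is divisible by $s$ and implicitly passes to a power; both are fine.
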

\begin{proof}
By Proposition ~\ref{prop1} d), we may assume that for $r\in\pi$, $\sigma_r\in A\cap\Gal(\GF(q^n):\GF(q))$ is an element of order $r$. With this it follows from the proof of Proposition ~\ref{prop2} that if $A$ does not have a regular orbit on $V$, then
\[V=\bigcup_{a\in A \backslash \{ 1\}}C_V(a)=\bigcup_{r\in\pi}\bigcup_{b\in B}C_V(b\cdot\sigma_r)=\bigcup_{b\in B}C_V(b\cdot\sigma_{s})\]
for a suitable prime $s\in\pi$. Since for $b\in B$ the order of $b\cdot\sigma_s$ is divisible by $o(\sigma_sB)=s$, we see that every $v\in V$ is fixed by an element of order $s$.
\end{proof}

\begin{lemma}  \label{lem1}
Let $S$ be a transitive solvable permutation group on $\Omega$ with $|\Omega|=m$. If $|S|$ is odd, then S has a regular orbit on the power set $P(\Omega)$ of $\Omega$.
\end{lemma}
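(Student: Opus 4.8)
The plan is to prove the equivalent statement that $S$ has a subset $X\subseteq\Omega$ with trivial set-stabiliser $\Stab_S(X)=1$ (identifying $P(\Omega)$ with $\FF_2^\Omega$, this is exactly a regular orbit on $P(\Omega)$), arguing by induction on $m=|\Omega|$. The first thing to record is that $m=|S:S_\omega|$ divides the odd number $|S|$, so $m$ is odd; hence every block in every block system of $S$ has odd size, and this parity is the crux of the inductive step. The case $m=1$ is trivial, and for $m>1$ I split according to whether $S$ is primitive.

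Assume first that $S$ is imprimitive, and fix a block system $\mathcal B=\{B_1,\dots,B_k\}$ with $1<\ell<m$, $\ell=|B_i|$, $k=m/\ell$; note $\ell$ is odd. Let $\bar S=S^{\mathcal B}$ be the group induced on $\mathcal B$ and $T=H^{B_0}$ the group induced on $B_0:=B_1$ by its setwise stabiliser $H=\Stab_S(B_0)$; both are transitive, solvable, of odd order, and act on sets of size $<m$, so by induction there exist $\mathcal Y\subseteq\mathcal B$ with $\Stab_{\bar S}(\mathcal Y)=1$ and $Z_0\subseteq B_0$ with $\Stab_T(Z_0)=1$. Since $\ell$ is odd, the complement $Z_0':=B_0\setminus Z_0$ also satisfies $\Stab_T(Z_0')=1$ and, crucially, $|Z_0|\ne|Z_0'|$. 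For each $i$ choose $h_i\in S$ with $h_i(B_0)=B_i$, set $Z_i:=h_i(Z_0)$ if $B_i\in\mathcal Y$ and $Z_i:=h_i(Z_0')$ otherwise, and put $X:=\bigcup_i Z_i$. If $g\in\Stab_S(X)$, then $g$ carries $Z_i$ onto $Z_j$ with $B_j=g(B_i)$, so $|Z_i|=|Z_j|$; since $|Z_0|\ne|Z_0'|$ this forces the induced permutation $\bar g$ of $\mathcal B$ to preserve $\mathcal Y$, hence $\bar g=1$, i.e. $g$ fixes every $B_i$ setwise and therefore fixes every $Z_i$ setwise. Writing $g=h_i\hat g h_i^{-1}$ with $\hat g\in H$, the relation $g(Z_i)=Z_i$ becomes $\hat g(Z_0)=Z_0$ or $\hat g(Z_0')=Z_0'$, forcing $\hat g$ to act trivially on $B_0$, hence $g$ to act trivially on $B_i$; as $i$ was arbitrary, $g=1$ by faithfulness. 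This disposes of the imprimitive case.

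There remains the case where $S$ is primitive. A primitive solvable group is affine, so I may identify $\Omega$ with $V=\FF_p^e$ for an odd prime $p$ and write $S=V\rtimes S_0$ with $S_0\le\GL(e,p)$ irreducible of odd order; thus $m=p^e$. Since the $g$-invariant subsets of $V$ are exactly the unions of cycles of $g$, it suffices to verify
\[
\sum_{1\ne g\in S}2^{c(g)}<2^{p^e},
\]
where $c(g)$ is the number of $\langle g\rangle$-orbits on $V$; this says precisely that $P(V)$ is not covered by the fixed-point sets of the elements $g\ne1$ acting on $P(V)$. A nonidentity translation is fixed-point-free with all cycles of length $p$, contributing $(p^e-1)\,2^{p^{e-1}}$. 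A non-translation has the form $(v,A)$ with $A\ne I$, so it has at most $|\Ker(I-A)|\le p^{e-1}$ fixed points on $V$ and its remaining cycles have length $\ge3$, whence $c((v,A))\le p^{e-1}+\tfrac13(p^e-p^{e-1})=\tfrac13 p^{e-1}(p+2)$; since there are fewer than $p^e|S_0|$ such elements, the full sum is at most $(p^e-1)2^{p^{e-1}}+p^e|S_0|\,2^{p^{e-1}(p+2)/3}$.

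The main obstacle is to show that this last quantity is $<2^{p^e}$. For all but finitely many pairs $(p,e)$ this is immediate from the crude bound $|S_0|\le|\GL(e,p)|$, since $2^{p^{e-1}(p+2)/3}$ is negligible beside $2^{p^e}$; the remaining cases — the small degrees $p^e\in\{3,5,7,9\}$ (for degree $9$ one checks that $\GL(2,3)$ has no irreducible subgroup of odd order, so this degree never arises as a primitive case at all) together with a few further pairs such as $(p,e)=(3,3)$ — must be handled using a sharper upper bound for the order of an irreducible solvable linear group of odd order, or by directly listing the possibilities for $S_0$. The estimate is genuinely tight only in the smallest case $p=7$, $e=1$ (the Frobenius group of order $21$ on $7$ points, where the left side is $124<128$), so the delicate part of the whole argument is concentrated entirely in these low-degree affine groups; the inductive reduction from imprimitive to primitive is otherwise routine.
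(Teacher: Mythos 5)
The paper itself does not prove this lemma; it simply quotes it as Gluck's theorem, \cite[Corollary 5.7]{manz/wolf}. Your imprimitive reduction is correct and complete: the induction on $m$, the observation that the block size $\ell$ is odd so that $Z_0$ and $B_0\setminus Z_0$ have different cardinalities while both have trivial stabiliser in the induced group $T$, and the gluing of the $Z_i$ according to membership of $B_i$ in $\mathcal Y$ all work, and this is essentially the standard reduction of Gluck's theorem to the primitive case.

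The gap is in the primitive case, which is where the entire content of the theorem lies, and you have not closed it. The inequality $\sum_{1\neq g\in S}2^{c(g)}<2^{p^e}$ is the right target and your estimate $c(g)\le p^{e-1}(p+2)/3$ for non-translations is correct, but the list of degrees where the crude bound $|S_0|\le|\GL(e,p)|$ fails is longer than the four you name (it includes at least $p^e=3,5,7,11,13,25,27$), and some of these are not a harmless finite check: for $(p,e)=(3,3)$ the odd part of $|\GL(3,3)|$ is $351$, and $27\cdot350\cdot2^{15}>2^{27}$, so to make your sum work you must prove that an odd-order irreducible solvable subgroup of $\GL(3,3)$ has order at most roughly $150$. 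Producing such bounds --- in general, a bound of the form $|S_0|\le|V|^{c}$ for odd-order irreducible solvable linear groups --- is exactly the hard part of Gluck's argument, and deferring it with ``must be handled using a sharper upper bound or by directly listing the possibilities'' leaves the base case, and hence the lemma, unproved. As written you have a correct reduction resting on an unestablished foundation; either carry out the remaining case analysis with explicit bounds on $|S_0|$, or cite \cite[Corollary 5.7]{manz/wolf} as the paper does.
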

\begin{proof}
This is Gluck's Theorem ~\cite[Corollary 5.7]{manz/wolf}.
\end{proof}

\section{Main Theorem} \label{sec:maintheorem}

Now we prove a key result on our way to our main result.



\begin{theorem}\label{kellertheorem}
Let $q$ be a prime power. Let $G$ be a finite group and $V$ a finite, faithful, irreducible $\GF(q)G$-module.
Suppose that $V$ is induced from a submodule $W$ such that for $H=N_G(W)$ we have that $H/C_G(W)$ is isomorphic to an
(irreducible) subgroup of $\Gamma(W)$. Let $N=\bigcap\limits_{g\in G}H^g\unlhd G$. Assume that $|N|$ is odd. Suppose
that $N$ has an $r$-regular orbit on $V$ for every prime number $r$.
Then $N$ has a regular orbit on $V$.
\end{theorem}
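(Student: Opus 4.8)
The plan is to work with the block decomposition $V=W_1\oplus\cdots\oplus W_m$, where $W_1=W$ and $W_2,\dots,W_m$ are the remaining $G$-conjugates of $W$; put $d=\dim_{\GF(q)}W$. Since $N=\bigcap_{g\in G}H^g$ is exactly the kernel of the (transitive) permutation action of $G$ on $\{W_1,\dots,W_m\}$, the group $N$ normalises each $W_i$ and hence acts on $W_i$ through a subgroup $N_i$ of $\Gamma(W_i)$. Setting $L_i=\ker(N\to N_i)\unlhd N$ we have $\bigcap_i L_i=C_N(V)=1$, so $N$ is a subdirect product of the $N_i$; moreover conjugation by $G$ identifies the $N_i$, so all the $W_i$ have dimension $d$ and the $N_i$ are pairwise permutation isomorphic. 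Note also that $N$, being a subgroup of $\prod_i\Gamma(W_i)$, is metabelian, and it has odd order.

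The first reduction is that it suffices to produce $v=(v_1,\dots,v_m)$ with every $v_i\neq 0$ that is fixed by no element of prime order in $N$. Because $\Gamma_0(W_i)$ acts fixed-point-freely, any $1\neq n\in N$ whose image in some $N_i$ lies in $\Gamma_0(W_i)\setminus\{1\}$ already moves such a $v$; so the only elements to be defeated are the \emph{dangerous} ones --- those $n$ of prime order $p$ whose image in each $N_i$ is either trivial or has nontrivial Galois part. Such an $n$ forces $p\mid d$, and on each block $i$ in its support $K(n)=\{i:\bar n_i\neq 1\}$ one has $\dim C_{W_i}(\bar n_i)=d/p$. If no dangerous element exists (in particular when $\gcd(d,|N|)=1$), then any $v$ with all coordinates nonzero has $C_N(v)=\bigcap_i L_i=1$, and we are done.

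In general we run a dichotomy on the action of $N_1$ on $W_1$. If $N_1$ has a regular orbit on $W_1$ --- which, by Corollary \ref{cor1} together with the hypothesis of an $r$-regular orbit for all $r$, is automatic whenever $N=\prod_i N_i$ is the full direct product --- then carrying a regular-orbit representative to each block by $G$-conjugation gives $v$ with $C_N(v)=\bigcap_i L_i=1$. So we may assume $N$ is a \emph{proper} subdirect product and $N_1$ has no regular orbit on $W_1$; then Corollary \ref{cor1} and Proposition \ref{prop2} single out a short list of ``bad'' primes $s$, each dividing $\gcd(d,|N|)$, for which $N_1$ contains a conjugate of Turull's group $GN(q^d,s)$ and every nonzero vector of $W_1$ is fixed by a semilinear element of order divisible by $s$. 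For each bad prime $s$ the hypothesis supplies a vector of $V$ fixed by no element of $N$ of order $s$, hence by no dangerous element of order $s$; note that if there were only one bad prime this would finish the proof.

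The remaining, and hardest, task is to amalgamate these finitely many vectors --- one good against the dangerous elements of each bad prime --- into a single vector good against all of them at once. Here I would exploit the normal subgroups $L^{(i)}=\bigcap_{j\neq i}L_j\unlhd N$, which intersect pairwise trivially, hence commute and generate their internal direct product inside $N$ with $G/N$ permuting the factors transitively; the \emph{properness} of the subdirect embedding $N\leq\prod_i N_i$ is what bounds the number of dangerous semilinear elements of each bad prime order and controls how their supports $K(n)$ spread over the blocks (for the full direct product the hypothesis already fails, so there is room here). Gluck's theorem (Lemma \ref{lem1}), applied to the transitive solvable permutation action of $G/N$ on the $m$ blocks, then provides the asymmetry among the blocks needed to route the construction. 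The crude per-block densities that drive the counting are small precisely because $|N|$ is odd: every bad prime is at least $3$, so the fixed subspaces $C_{W_i}(\bar n_i)$ have codimension at least $2d/3$. I expect this amalgamation step --- reconciling several bad primes against the subdirect-product structure and the combinatorics of the supports $K(n)$ --- to be the main obstacle; the case where $N_1$ has no regular orbit on $W_1$, where all the difficulty concentrates, is exactly the situation the $r$-regular-orbit hypothesis is designed to constrain.
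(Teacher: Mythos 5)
Your setup (the block decomposition, the kernels $L_i$, the reduction to ``dangerous'' elements whose image on each block of their support has nontrivial Galois part, and the easy case where the image $N_1$ of $N$ in $\Gamma(W_1)$ has a regular orbit on $W_1$) is sound and matches the paper's starting point. But the proof stops exactly where the real work begins, and the sketch of the remaining step contains concrete problems. First, even with a single bad prime $s$, the $s$-regular-orbit hypothesis only yields a vector $a$ with $C_N(a)$ an $s'$-group; $C_N(a)$ may still contain dangerous elements of other prime orders $p$ (primes for which $GN(q^d,p)$ does not embed in $N_1$, so that not \emph{every} vector is fixed by a $p$-element, but this particular $a$ is), so your parenthetical claim that one bad prime would already finish the proof is unjustified. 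Second, the amalgamation over several primes is a list of hoped-for ingredients rather than an argument, and two of the ingredients are unavailable: Gluck's theorem (Lemma \ref{lem1}) requires the permutation group to be solvable of odd order, whereas in this theorem only $|N|$ is assumed odd and $G$ is an arbitrary finite group, so it cannot be applied to $G/N$; and a codimension/density count cannot close the gap, because the conclusion genuinely fails without the regular-orbit hypotheses (see the counterexamples in the introduction), so no unconditional estimate of fixed-point densities can suffice.

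For comparison, the paper's mechanism for this step is quite different: it proves the stronger statement that \emph{every} subgroup $U\leq N$ has a regular orbit, by taking a minimal counterexample $U$, which collapses the several-primes problem to the single smallest prime $s$ dividing $|U/(U\cap\Gamma_0(V_1)^m)|$ and supplies, via a maximal normal subgroup $U_0$ of index $s$, a vector $w=\sum v_i$ that is regular for $U_0$. A commutator argument (statement $(**)$ in the paper) shows that a Hall $s'$-subgroup of $N_0^m$ already lies in $N\cap\Gamma_0(V_1)^m$; if $s\nmid|N_0|$ this directly contradicts the existence of an $s$-regular orbit of $N$, and if $s\mid|N_0|$ one twists the coordinates by Sylow $s$-elements $s_i$ of $\Gamma_0(V_i)$ (not necessarily in $N$) and plays the resulting vector $v^*=\sum_i v_i^{s_i}$ off against an $s$-regular-orbit representative $a$ of $N$, using the structure of the elementary abelian $s^2$-subgroups of $\Gamma(V_i)$. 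You would need to supply something of comparable strength in place of your amalgamation step before this can count as a proof.
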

\begin{proof}
First observe that by Clifford theory we can write $V_N=V_1\oplus\ldots\oplus V_m$ (for some $m\in\n$) for homogeneous
components $V_i$, and we may assume that $V_1=W$. Moreover $G/N$ is isomorphic to a subgroup of $S_m$, permuting the
$V_i$ ($i=1,\ldots,m$) transitively.\\

We will now prove the following statement
\[(\dagger)\quad \mbox{If }U\leq N,\mbox{ then }U\mbox{ has a regular
orbit on }V.\]
Applying $(\dagger)$ with $U=N$ clearly gives the assertion of the
theorem.\\

We prove ($\dagger$) by contradiction.
So let $U\leq N$ be a minimal counterexample, i.e., every subgroup of $U$ has a regular orbit on $V$, while $U$
does not have such an orbit. Clearly $U>1$. \\
Now as
\[U\ug\Gamma(V_1)\times\ldots\times \Gamma(V_m)\cong\Gamma(V_1)^m\]
(where $\Gamma(V_1)^m$ denotes the direct product of $m$ copies of $\Gamma(V_1)$ and $\ug$ means "is isomorphic to a subgroup of"), throughout the proof we will
identify $U$ with its isomorphic copies in these bigger groups, as
needed.\\

If $U\leq\Gamma_0(V_1)^m$, then clearly $U$ has a regular orbit on $V$
which contradicts the choice of $U$. Thus $U/(U\cap\Gamma_0(V_1)^m)$ is nontrivial, and we let $s$ be the smallest
prime dividing $|U/(U\cap\Gamma_0(V_1)^m)|$. (By the way, we will not use the minimality of $s$ until (2) further
down the proof. Everything up to there is valid for any prime divisor of $|U/(U\cap\Gamma_0(V_1)^m)|$.)\\

As $V$ is a faithful $U$-module there exists an $i$ such that there is an element $x\in U$ of $s$-power order such
that, if we read $U/C_U(V_i)\ug\Gamma(V_i)$, we have that $xC_U(V_i)\not\in\Gamma_0(V_i)$ and $x\not\in C_U(V_i)$;
in particular $s$ divides $|\Gamma(V_i)/\Gamma_0(V_i)|$. Without loss of generality we may assume that $i=1$.
Put $C=C_U(V_1)$. Then $C<U$, so let $C\leq U_0\lhd U$ such that $|U:U_0|=s$. Now as $U_0<U$, we know that $U_0$
has a regular orbit on $V$. Let $w\in V$ be a representative of such an orbit, and write $w=\sum\limits_{i=1}^mv_i$
with $v_i\in V_i$ for $i=1,\ldots,m$. As $1=C_{U_0}(w)=\bigcap\limits_{i=1}^mC_{U_0}(v_i)$, we obviously may assume that
$v_i\not=0$ for $i=1,\ldots,m$. Next put $v=\sum\limits_{i=2}^mv_i$ and observe that $C_C(v)=1$, so $v$ is in a regular
orbit of $C$ on $X:=V_2\oplus\ldots\oplus V_m$.\\

Next we claim that $C_U(v)$ is isomorphic to a subgroup of $U/C$.
To see this, define $\phi:C_U(v)\rightarrow U/C$ by $\phi(u)=uC$ for $u\in C_U(v)$. Clearly $\phi$ is a homomorphism,
and if $u\in\ker(\phi)$, then $u\in C\cap C_U(v)=C_C(v)=1$, so $\phi$ indeed is a monomorphism.
Hence we have
\[C_U(v)\ug U/C\ug\Gamma(W)=\Gamma(V_1).\]
We identify $C_U(v)$ with its corresponding subgroup in
$\Gamma(V_1)$.\\

Now assume that $C_U(v)$ has a regular orbit on $V_1$.\\
Then let $v_1^*\in V$ be in a regular orbit of $C_U(v)$ on $V_1$. Then for
$w^*=v_1^*+v\in V$ we have $C_U(w^*)=C_U(v_1^*)\cap C_U(v)=C_{C_U(v)}(v_1^*)=1$, so $w^*$ is in a regular orbit of $U$ on $V$
contradicting our choice of $U$ as a counterexample.\\

Thus we now may assume that $C_U(v)$ does not have a regular orbit on
$V_1$.\\
Then from Corollary \ref{cor1} we know that there is a prime $d$ such that every element in $V_1$ is fixed by some element of order $d$ in
$C_U(v)$, and since $v_1\in V_1$ is in a $s'$-regular orbit of $C_U(v)$, clearly $d=s$.\\

Now write $|V_1|=q^n$ and recall that $C_U(v)\ug\Gamma(V_1)$. Thus by Proposition ~\ref{prop1}(d) we may assume that there exists 
a $\sigma\in\Gal(\GF(q^n):\GF(q))$ of order $s$ with $\sigma\in C_U(v)$. Next let
$N_0=\{x\in\GF(q^n)-\{0\}\ |\ \prod\limits_{\gamma\in\langle\sigma\rangle}\gamma(x)=1\}$
and $H_0=N_0\langle\sigma\rangle\leq\Gamma(V_1)$. As $C_U(v)$ does not have a $s$-regular orbit on $V_1$, by the proof of Proposition ~\ref{prop2} 
we conclude that $H_0\ug C_U(v)$ and $N_0\leq C_U(v)\cap\Gamma_0(q^n)$ (if we
identify $C_U(v)$ with its image in $\Gamma(q^n)$).\\

Clearly \[N\ug\X_{i=1}^m N/C_N(V_i)\ug\X_{i=1}^m\Gamma(V_i)\cong\Gamma(V_1)^m.\]

We now claim that\\

($*$) if $S$ is a Hall $s'$-subgroup of $N_0^m$, then
\[S\leq N\cap\Gamma_0(V_1)^m.\]

Clearly $N_0^m\leq\Gamma_0(V_1)^m$, so we only need to show $S\leq N$.
To do this, we will prove the following:\\

($**$) Let $r\not=s$ be a prime dividing $|N_0|$. Then there exists an element $y\in N_0\leq C_U(v)$
of order $|N_0|_r$ which acts fixed point freely on $V_1$ and trivially on $V_i$ for
$i=2,\ldots,m$. (Here $|N_0|_r$ denotes the $r$-part of $|N_0|$.)\\

For the moment, assume that ($**$) is already proven and let $y$ be as in
($**$).\\
Then for $i=1,\ldots,m$ let $g_i\in G$ such that $V_1^{g_i}=V_i$, and let $c_i=y^{g_i}$. Then $c_i\in N\cap\Gamma_0(V_1)^m$,
and $c_i$ acts fixed point freely on $V_i$ and trivially on all $V_j$ with $j\not=i$. Thus the group
$\langle c_1,\ldots,c_m\rangle $ is a homocyclic group of order $(|N_0|_r)^m$ and thus is a Sylow $r$-subgroup of $N_0^m$. Since $r$
is an arbitrary prime as in ($**$), we conclude that $N\cap\Gamma_0(V_1)^m$ contains a Sylow $r$-subgroup of $N_0^m$
for every such prime $r$. This implies $S\leq N\cap\Gamma_0(V_1)^m$, as stated in claim ($*$).\\

Thus to finish the proof of ($*$) we next need to prove ($**$).\\
So let $r\not=s$ be a prime dividing $|N_0|$ and let $x\in N_0\leq C_U(v)$ be an element of order $|N_0|_r$. From Proposition ~\ref{prop1} c) we know that $r>s$ and we know that $\sigma$ acts fixed point freely on $\langle x\rangle$. Consider $y=[\sigma,x]$. Then it is easy to see that $\langle y\rangle=\langle x\rangle$ and in particular $y$ is of order $|N_0|_r$. Observe that $N\ug\X_{i=1}^mN/C_N(V_i)=:M$, so let $(\sigma_1,\ldots,\sigma_m)$ with $\sigma_i\in N/C_N(V_i)$ be the image of $\sigma$ in $M$, and let $(x_1,\ldots,x_m)$ with $x_i\in N/C_N(V_i)$ be the image of $x$ in $M$. Thus $(y_1,\ldots,y_m)$ with $y_i=[\sigma_i,x_i]$ is the image of $y$ in $M$. As $\sigma\in C_N(v)=\bigcap\limits_{i=2}^mC_N(v_i)$ and also $x\in C_N(v)$, we see for $i=2,\ldots,m$ that $\sigma_i$ and $x_i$ are elements of $C_{N/C_N(V_i)}(v_i)$ which is an abelian group. Hence $(y_1,\ldots,y_m)=(y_1,1,\ldots,1)$ which means that $y$ acts trivially on $V_2,\ldots,V_m$. But clearly $y_1\not=1$ acts fixed point freely on $V_1$, so $y$ acts fixed point freely on $V_1$. Hence $y$ is an element of the kind claimed to exist in ($**$).\\

This concludes the proof of ($**$) and thus of ($*$).\\

Now suppose that $s$ does not divide $|N_0|$. Then by ($*$) we have $N_0^m\ug N\cap\Gamma_0(V_1)^m$.
Moreover recall from above that $\sigma\in C_U(v)$ is an element of order $s$ fixing each $v_i\in V_i$
($i=1,\ldots,m$); in particular $\sigma\not\in N_0^m$. \\
Now for arbitrary $x_i\in V_i$ ($i=1,\ldots,m$) it follows that there is an element of order $s$ in $N$ which centralizes
$x=\sum\limits_{i=1}^mx_i$. To see this, suppose that for some $j\in\{0,\ldots,m-1\}$ there is already an element $y_j\in N$ of order $s$
centralizing $\sum\limits_{i=1}^jx_i$ such that $y_j$ has nontrivial fixed points on each $V_i$ ($i=1,\ldots,m$).
(If $j=0$, we may choose $y_0=\sigma$.)
If $x_{j+1}=0$ or if $y_j$ acts trivially on $V_{j+1}$, then $y_j$ also
centralizes $\sum\limits_{i=1}^{j+1}x_i$; however if $y_j$ acts nontrivially on $V_{j+1}$, then as $N_0^m\ug N$, we have that
\[\langle y_j\rangle N_0\ug
C_N(\sum_{i=1}^jx_i)/(C_N(\sum_{i=1}^jx_i)\cap C_N(V_{j+1})),\]
and then we know from Proposition ~\ref{prop2} and Corollary ~\ref{cor1} 
that every element in $V_{j+1}$, so in particular $x_{j+1}$, is fixed
by some element $y_{j+1}$ of order $s$ in $\langle y_j\rangle N_0$ and since we can get $y_{j+1}$ by multiplying $y_j$ by an element of the
$(j+1)$st copy of $N_0$ in $(N_0)^m$ (i.e., by multiplying $y_j$ by an element acting trivially on all $V_k$ with $k\not=j+1$),
$y_{j+1}$ will also retain the property that it has nontrivial fixed points on each $V_k$ ($k=1,\ldots,m$).\\

As the $x_i\in V_i$ ($i=1,\ldots,m$) can be chosen arbitrarily in the previous argument, this shows that $N$ does not have a $s$-regular
orbit on $V$, contradicting our hypothesis.\\

Hence we now may assume for the remainder of the proof that $s||N_0|$, and furthermore from ($*$)
we know that $S\leq N\cap\Gamma_0(V_1)^m$ for a Hall $s'$-subgroup $S$ of $N_0^m$. Clearly $S\unlhd
N$.\\

Next we collect some properties of the group $\overline{N}=N/C_N(V_1)\leq\Gamma(V_1)$.\\

(1) $\overline{N}$ contains elementary abelian subgroups of order $s^2$. Moreover, $\overline{N}\leq\Gamma(V_1)$ contains every elementary abelian subgroup of order $s^2$ of $\Gamma(V_1)$, and they are all conjugate by elements of a Hall $s'$-subgroup of $N_0$.\\

The first statement in (1) is obvious.
To see the second part of (1), observe that all elementary abelian subgroups of order $s^2$ in $\Gamma(V_1)$ are
conjugate, since a Sylow $s$-subgroup $R$ of $\Gamma(V_1)$ has exactly one such subgroup, namely $\Omega_1(R)$
(cf. \cite[Theorem 5.4.3 and 5.4.4]{gorenstein}), and if $\sigma\in\Gal(V_1)$ (by which we mean the Galois action part of $\Gamma(V_1)$)
is of order $s$ and $x\in\Gamma_0(V_1)$
is of order $s$, then $\langle x\rangle\unlhd\Gamma(V_1)$ and $B:=\langle\sigma,x\rangle$ is elementary abelian
of order $s^2$. Now one gets all conjugates of $B$ in $\Gamma(V_1)$ by conjugation with elements in $\Gamma(V_1)$ that
$\sigma$ does not commute with. Thus the number of conjugates of $B$ is easily seen to be $|S_0|$ where $S_0$ is a
Hall $s'$-subgroup of $N_0$, as $\sigma$ acts fixed point freely on $S_0$ and trivially on the complement $S_1$ of
$S_0$ in $\Gamma_0(V_1)$ which is of order $q^\frac{n}{s}-1$.\\

We remark here that by Proposition ~\ref{prop1}(a) 
we know that $|N_0|=\frac{q^n-1}{q^\frac{n}{s}-1}$.\\
Also, since in general, $\gcd\left(\frac{a^l-1}{a-1},a-1\right)=\gcd(l,a-1)$ for $a,l\in\n$,
we see that
\[\gcd\left(\frac{q^n-1}{q^\frac{n}{s}-1},q^\frac{n}{s}-1\right)=\gcd(s,q^\frac{n}{s}-1)=s,\]
and therefore $N_0\cap S_1=\langle x\rangle$.

At any rate, since the $B^{s_0}$ for $s_0 \in S_0$ are the $|S_0|$ conjugates of $B$ in $\Gamma(V_1)$ and since
$S_0\ug\overline{N}$ in our special situation, we see that the second statement in (1) is indeed
true.\\

(2) Let $Q\in\Syl_s(\overline{N})$ and $Q_0=Q\cap\Gamma_0(V_1)$. Write $\overline{U}=U/U\cap C_N(V_1)$.
Let $r\not=s$ be a prime. Then any $r$-element of $\overline{U}$ centralizes
$Q_0$.\\

To prove (2), assume that for some prime $r\not=s$, there is a $g\in U$ such that $\overline{g}=gC_N(V_1)\in\overline{N}$
is an $r$-element not centralizing $Q_0$. Clearly we may assume that $g$ is an $r$-element and $g\not\in\Gamma_0(V_1)$.
In particular, $r$ divides $|U/(U\cap\Gamma_0(V_1)^m)|$ and thus, by the minimal choice of $s$, we have $r>s$.
As $\overline{g}$ normalizes, but does not centralize $Q_0$, by \cite[Theorem 5.2.4]{gorenstein} $\overline{g}$
acts nontrivially on $\Omega_1(Q_0)$ and thus $r$ divides $s-1$. Thus altogether we have $r\leq s-1<r-1$, which is a contradiction.
This proves (2).\\

Fix $i\in\{1,\ldots,m\}$ and let $y\in\Gamma_0(V_i)$ such that $\langle y\rangle\in\Syl_q(\Gamma_0(V_i))$.
Note that $y$ need not be in $N$. Let $y_0\in\langle y\rangle$ be of order $s$, so that $y_0\in N/C_N(V_i)$ with
the usual identifications. Moreover, let $x_0\in N/C_N(V_i)$ be of order $s$ such that $\langle x_0,y_0\rangle\leq
N/C_N(V_i)$ is elementary abelian of order $s^2$. Now as $s$ is odd, from \cite[Theorem 5.4.4]{gorenstein} we know the
exact structure of $\langle x_0,y\rangle$. In particular, $\langle x_0,y_0\rangle$ has exactly $s+1$ subgroups of order $s$,
namely $\langle y_0\rangle$ and $\langle y_0^jx_0\rangle$ for $j=0,\ldots,s-1$, and while the first one is centralized
by $y$, the remaining ones form a single orbit under conjugation by $y$, as $x_0^y=x_0y_0$. Now let $t_j\in V_j$
for $j=1,\ldots,m$ and put $t=t_1+\ldots+t_m\in V$ and $t^*=t_1+\ldots+t_{i-1}+t_i^y+t_{i+1}+\ldots+t_m$.
Furthermore, let $M$ and $M^*$ be the sets of $s'$-elements of $U$ which centralize $t$ and $t^*$, respectively. We now
claim:\\

(3) $M=M^*$.\\

To see this, first for convenience suppose that $i=1$. (This is possible since the groups $N/C_N(V_j)$ ($j=1,\ldots,m$)
are mutually isomorphic and thus the corresponding versions of (1), (2) hold for all $N/C_N(V_j)$ ($j=1,\ldots,m$)
as well.)
Now let $z_1\in M$, so $z_1\in\bigcap\limits_{i=1}^mC_U(t_i)$. To see that $z_1\in M^*$, obviously it suffices to show
that $\overline{z_1}=z_1C_N(V_1)$ centralizes $t_1^y\in V_1$ in the action of $\overline{N}$ on $V_1$.
Now $\overline{z_1}$ is a $s'$-element in $\overline{N}$ and thus by (2), centralizes $y_0\in\overline{N}$. Now view, in the
usual fashion, $\overline{z_1}$ and $y_0$ as elements in $\Gamma(V_1)$, then still $\overline{z_1}$ centralizes
$y_0\in\Gamma_0(V_1)$ and thus by \cite[Theorem 5.2.4]{gorenstein} $\overline{z_1}$ centralizes $y$. Thus
$\left(t_1^y\right)^{\overline{z_1}}=t_1^{\overline{z_1}y}=t_1^y$ (as $\overline{z_1}$ centralizes $t_1$),
so $\overline{z_1}$ indeed centralizes $t_1^y$, as wanted. This proves that $M\subseteq M^*$. Similarly, if $z_2\in M^*$,
then $\overline{z_2}=z_2C_N(V_1)$ will centralize $y$ and hence $\left(t_1^{\overline{z_2}}\right)^y=\left(t_1^y\right)^{\overline{z_2}}=
t_1^y$ implies $t_1^{\overline{z_2}}=t_1$ and so $z_2\in M$. Thus $M^*\subseteq M$, and altogether (3) is
established.\\

We now start working towards a final contradiction. Recall that $w=\sum\limits_{i=1}^mv_i$ lies in a regular orbit of $U_0$ on $V$, and
$U_0\lhd U$ with $|U/U_0|=s$, and as $w$ does not lie in a regular orbit of $U$, we have $|C_U(w)|=s$. By hypothesis $N$ has a
$s$-regular orbit on $V$, so let $a=a_1+\ldots+a_m\in V$ with $a_i\in V_i$ ($i=1,\ldots,m$) be a representative of such an orbit.
Clearly we may assume that $a_i\not=0$ for all $i=1,\ldots,m$. We claim the
following:\\

(4) For each $i\in\{1,\ldots,m\}$ let $C_U(V_i)\leq R_i\leq U$ be such that $R_i/C_U(V_i)=\Omega_1(R_i^*)$, where $R_i^*$
is the Sylow $s$-subgroup of the cyclic group $C_{U/C_U(V_i)}(v_i)$. Note that we can view
$U/C_U(V_i)\cong U/U\cap C_N(V_i)\cong UC_N(V_i)/C_N(V_i)$ to be a
subgroup of $N/C_N(V_i)$. Next let $C_N(V_i)\leq S_i\leq N$ be such that $S_i/C_N(V_i)$ is elementary abelian of order $s^2$
and such that $(S_i/C_N(V_i))\cap (C_N(a_i)/C_N(V_i))>1$ (this is possible by (1) as $C_N(a_i)/C_N(V_i)$ contains elements of order $s$).
Then there exists an $\alpha\in N$ such that $R_i\leq S_i^\alpha$ for $i=1,\ldots,m$.
In particular, by replacing $a$ by $a^\alpha$, we may assume that $R_i\leq S_i$ for $i=1,\ldots
m$.\\

We prove (4) by first observing that if $R_i/C_U(V_i)\cong R_iC_N(V_i)/C_N(V_i)$ is the trivial group for some $i$, then
for $n_i=1$ we have $R_i\leq S_i^{n_i}$. Now suppose that $R_i/C_U(V_i)$ is not trivial, i.e., $R_i/C_U(V_i)$ is cyclic of order $s$.
Then as the $N/C_N(V_j)$ ($j=1,\ldots,m$) are mutually isomorphic, from (1) we conclude that there is a $s'$-element $n_iC_N(V_i)$
(for some $n_i\in N$) of the unique subgroup of order $\left(\frac{q^n-1}{q^\frac{n}{s}-1}\right)_{s'}$ in $N/C_N(V_i)\cap\Gamma_0(V_i)$
such that
\[(R_i/C_U(V_i))\cong R_iC_N(V_i)/C_N(V_i)\leq(S_i/C_N(V_i))^{n_iC_N(V_i)}=S_i^{n_i}/C_N(V_i)\]
and thus with suitable
identification $R_i\leq S_i^{n_i}$. Note that by ($*$) we may assume that $n_i\in N_0^m$ and that $n_i$ acts trivially on $V_j$ for
all $j\not=i$. In particular, for $j\not=i$ we have that $n_i\in C_N(V_j)$ and hence $S_j^{n_i}=S_j$.
Thus if we put $\alpha=n_1n_2\ldots n_m$, then we have $\alpha\in N$ and $R_i\leq S_i^{n_i}=S_i^\alpha$, as desired. The rest of (4) now
follows immediately, so (4) is proved.\\

Next we recall that $C_U(v)$ does not have a regular orbit on $V_1$ and that
\[C_U(v)\ug U/C_U(V_1)\ug N/C_N(V_1),\]
and as the
$N/C_N(V_i)$ are all isomorphic, we see that $N/C_N(V_i)$ does not have a regular orbit on $V_i$ ($i=1,\ldots,m$). Consequently
there is a unique $Q_i\leq S_i$ such that $C_N(V_i)\leq Q_i$, $|Q_i/C_N(V_i)|=s$ and $Q_i/C_N(V_i)\leq C_{N/C_N(V_i)}(a_i)$
($i=1,\ldots,m$). Thus also $Q_i\leq C_N(a_i)$ for
$i=1,\ldots,m$.\\

Fix $i\in\{1,\ldots,m\}$. By (4) we have $R_i\leq S_i$ and $R_i/C_U(V_i)\ug S_i/C_N(V_i)\leq\Gamma(V_i)$. Now let $y_i\in\Gamma(V_i)$
such that $\langle y_i\rangle\in\Syl_s(\Gamma_0(V_i))$. From the work preceding (3) we conclude that there exists an $s_i\in\langle y_i\rangle$
such that $(R_i/C_U(V_i))^{s_i}\leq Q_i/C_N(V_i)$.
We remark here that if $R_i/C_U(V_1)=1$, then we may choose $s_i$ arbitrarily , say $s_i=1$, whereas if $R_i/C_U(V_1)$
is cyclic of order $s$, then $s_i$ is uniquely determined. We also point out that $s_i$ need not be in $N/C_N(V_i)$ and hence
$(R_i/C_U(V_i))^{s_i}$ need not be a subgroup of $U/C_U(V_i)$; in case it is not we have $C_{U/C_U(V_i)}(v_i^{s_i})=1$. So in any case
we have
\[C_{U/C_U(V_i)}(v_i^{s_i})\leq Q_i/C_N(V_i)\qquad (5).\]

We now consider $v^*=v_1^{s_1}+\ldots+v_m^{s_m}$. By (3), applied several times, we know that the set of $s'$-elements of $U$ centralizing
$v^*$ is the same as the set of $s'$-elements of $U$ centralizing $v$. As $|C_U(v)|=s$, the latter set is empty, and so we conclude that
$C_U(v^*)$ is an $s$-group. Now let $z\in C_U(v^*)$ be of order $s$. Then $z_i=zC_U(V_i)$ centralizes $v_i^{s_i}$, whence by (5)
we have $zC_U(V_i)\subseteq Q_i$ and hence $z\in Q_i$ ($i=1,\ldots,m$).
Recall that $Q_i\leq C_N(a_i)$ for $i=1,\ldots,m$. This implies that
\[z\in\bigcap_{i=1}^mC_N(a_i)=C_N(a),\]
so $s$ divides $|C_N(a)|$.
On the other hand, $a\in V$ was chosen to be in a $s$-regular orbit of $N$ on $V$. This contradiction shows that $z$ cannot exist,
and thus $C_U(v^*)=1$. So $v^*$ is in a regular orbit of $U$ on $V$. This final contradiction completes the proof of the theorem.
\end{proof}

The following problems seem to be of interest in the context of the theorem just proved.\\

\begin{rem}\label{kellerremark}

(a) Is the theorem true without the action "on top"?\\
That is, if $G\leq\Gamma(V_1)\times\ldots\times\Gamma(V_m)$ is of odd order and the $|V_i|$ are odd for all $i$, does $G$ have
a regular orbit on $V=V_1\oplus\ldots\oplus V_m$ if $G$ has a $p$-regular orbit on $V$ for every prime $p$? For $m=1$ the answer is yes by
Corollary \ref{cor1}, but in general it is open. We have not pursued this further here since it was not needed to get our main results, but it would
certainly be interesting to settle this problem.\\

(b) Can the oddness hypothesis in the theorem be weakened? Or, for that matter, can it be weakened in the more general problem stated in
(a)?\\
Again, in view of the main goal of this paper and the corresponding counterexamples we did not need to tackle these questions, but they
are natural to ask here.
\end{rem}

We are finally ready to prove our main result.

\begin{theorem}
Assume that a finite solvable group $G$ of odd order acts faithfully and irreducibly on a vector space $V$ over a field of odd characteristic. If $G$ has a $p$-regular orbit for every prime $p$ dividing $|G|$, then $G$ will have a regular orbit on $V$.
\end{theorem}
\begin{proof}
First assume $V$ is a primitive $G$-module. By ~\cite[Lemma 2.1]{AE1}, either $G$ has two regular orbits or $G \leq \Gamma(V)$. Assume $G \leq \Gamma(V)$, $G$ will have a regular orbit on $V$ by Corollary ~\ref{cor1}. Thus we may assume the action of $G$ is imprimitive.

We may assume that $V$ is induced from $V_1$. Let $H_1=\bN_G(V_1)/\bC_G(V_1)$ and we may also assume the action of $H_1$ on $V_1$ is primitive. If $H_1 \not \leq \Gamma(V_1)$ then $H_1$ has 2 regular orbits on $V_1$ by ~\cite[Lemma 2.1]{AE1}. Since $G$ is isomorphic to a subgroup of $H_1 \wr S$ for some permutation group $S$ of odd order, then with Lemma ~\ref{lem1} it is easy to see that $G$ has at least $2$ regular orbits on $V$ and we may assume $H_1 \leq \Gamma(V_1)$. Let $H=\bN_G(V_1)$, $N=\bigcap_{g \in G} H^g$ and $S=G/N$. By Theorem ~\ref{kellertheorem} we know that $N$ has at least one regular orbit on $V$. Let $z=z_1+z_2+\dots + z_m \in V$ be a representative of such an orbit, $\bC_N(z)=1$ and we may assume that all $z_i \neq 0$.

$S$ is a solvable permutation group on $\Omega=\{V_1,\dots,V_m\}$. Since $|S|$ is odd, by Lemma ~\ref{lem1}, $\Omega$ can be written as a disjoint union of $A_1,A_2$ $(A_i \neq \emptyset$, $i=1,2)$ such that $\Stab_S(A_1)=1$. Since $|H_1||V_1|$ is odd, for any element $0 \neq v_1 \in V_1$ in an orbit of $H_1$, $-v_1$ is an element in a different orbit of $H_1$ and $|v_1^{H_1}|=|(-v_1)^{H_1}|$. Thus all nontrivial orbits of $H_1$ on $V_1$ are paired and we may assign each nontrivial orbit of $H_1$ on $V_1$ a $+/-$ sign. Let $1 \leq i \leq m$ and suppose that $V_i=V_1 g$ for some $g \in G$. If $v_1$ is in a positive orbit of $H_1$ on $V_1$, then we define $v_i=v_1 g$ to be in a positive orbit of $H_i$ on $V_i$, if $v_1$ is in a negative orbit of $H_1$ on $V_1$, then we define $v_i=v_1 g$ to be in a negative orbit of $H_i$ on $V_i$. Suppose that $V_i \in A_1$; if $z_i$ is in a positive orbit of $V_i$, then we set $y_i=x_i$ and if $z_i$ is in a negative orbit of $V_i$, then we set $y_i=-x_i$. Suppose that $V_i \in A_2$; if $z_i$ is in a positive orbit of $V_i$, then we set $y_i=-x_i$ and if $z_i$ is in a negative orbit of $V_i$, then we set $y_i=x_i$. We define a new element $y=y_1+y_2+\dots + y_m$. Since $\bC_N(z_i)=\bC_N(-z_i)$, \[\bC_N(y)=\bigcap_{i=1}^{n} \bC_N(z_i) =1.\] Thus $\bC_G(y)=\bC_N(y)=1$.
\end{proof}

\section{Appendix} \label{sec:appendix}
We shall mention that Professor Thomas Wolf was aware of the imprimitive counterexample to Zhang's question many years ago. We appreciate that he allows us to include the example here.

EXAMPLE.  Let $H$ be a cyclic group of order $q^n-1$ for a prime power $q^n > 2$ and suppose that $\gcd(q^n-1, m) = 1$ for an integer $m > 1$.  Then $G = H \wr Z_m$ acts faithfully and irreducibly on a vector space V of order $q^{nm}$ with no regular orbits on $V$, but such that for each prime $p \mid |G|$, $G$ does have a $p$-regular orbit.

Proof.  Now $H$ acts transitively on the non-zero vectors of a vector space $W$ of order $q^n$.  Then $G = H \wr Z_m$ acts faithfully and irreducibly on a vector space $V = W + \cdots + W$($m$ times). Then $G$ and the base group $H\times \cdots \times H$ acts transitively on the set $C = \{(x_1,\cdots,x_m) \in V$ all $x_i$ are non-zero $\}$.  Now $C$ is a $G$-orbit of size $(q^n-1)m = |G|/m$.  Since $\gcd(q^n-1, m) = 1$ and $m > 1$, $C$ is a $p$-regular orbit for all prime divisors of  $q^n-1$, but is not a regular orbit for $G$.  If $v$ is in $V \backslash C$, then $v$ has at least one zero component and $\bC_G(v)$ has a subgroup isomorphic to $H$.  Thus $v$ is not in a regular orbit and $G$ has no regular orbits in $V$.  The $G$-orbit $D = \{(x_1,\cdots,x_m) \in V|$ exactly one $x_i$ is non-zero $\}$ has $m(q^n-1)$ elements and is a  $p$-regular orbit for all primes $p$ dividing $m$.  Thus, for every prime $p$, $G$ has a $p$-regular orbit.






\begin{thebibliography}{19}
\bibitem{AE1} {A. Espuelas}, 'Large character degree of groups of odd order', {Illinois J. Math.} {\bf 35} (1991), 499-505.

\bibitem{GAP} GAP. The GAP Group, GAP-Groups, Algorithms, and Programming, Version 4.3; 2002.

\bibitem{gorenstein}  D. Gorenstein, Finite groups, Harper and Row, New York, 1968.

\bibitem{LuCao} {Kewei L\"{u}, Jinglong Cao}, 'An example of a problem on regular orbits'. Adv. Math. (China) {\bf 29} (2000) no. 4, 337--340.

\bibitem{Lewis} {M. L. Lewis}, Mathematical Reviews (MR 1853856(2002f;20010)).

\bibitem{manz/wolf} O. Manz, T. R. Wolf, Representations of solvable groups, London Mathematical Society, Lecture Notes Series {\bf 185}, Cambridge University Press, Cambridge, 1993.
\bibitem{turull} A. Turull, 'Supersolvable automorphism groups of solvable groups', Math. Z. {\bf 183} (1983), 47-73.
\bibitem{Zhang} {J. P. Zhang}, '$p$-regular orbits and $p$-blocks of defect zero', {Communications in Algebra} {\bf 21} (1993), 299-307.

\end{thebibliography}
\end{document}